	\theoremstyle{plain}
		\newtheorem{theo}{Theorem}[section]
		\newtheorem{prop}[theo]{Proposition}
		\newtheorem{defi}[theo]{Definition}%ThmもDefも連番
		\newtheorem{cor}[theo]{Corollary}
	\theoremstyle{definition}	
		\newtheorem{remark}[theo]{Remark}%本文が斜体にならない
\newcommand{\zzz}{\mathbb{Z}^3}
\newcommand{\zd}{\mathbb{Z}^d}
\newcommand{\prob}{\mathbf{P}}
\newcommand{\ust}{\mathcal{U}}
\newcommand{\Reff}[2]{R_{\mathrm{eff}}({#1},{#2})}
\newcommand{\gamfty}{\gamma_\infty}%ILERW
\newcommand{\vep}{\varepsilon}
\newcommand{\ie}{\textit{i}.\textit{e}.\ }
\begin{document}
\title{\textbf{Infinite collision property for the three-dimensional uniform spanning tree }}
	\author{Satomi Watanabe}
	\date{Department of Advanced Mathematical Sciences \\ Graduate School of Informatics \\ Kyoto University}
	\maketitle
	
\begin{abstract}
	Let $\ust$ be the uniform spanning tree on $\zzz$, whose probability law is denoted by $\prob$. 
	For $\prob$-a.s. realization of $\ust$, the recurrence of the the simple random walk on $\ust$ is proved in 
		\cite{BLPS01} and it is also demonstrated in \cite{Hut-Peres} that two independent 
		simple random walks on $\ust$ collide infinitely often. 
	In this article, we will give a quantitative estimate on %the moments of 
	the number of collisions of two independent 
		simple random walks on $\ust$, which provides another proof of the infinite collision property
		of $\ust$. 
\end{abstract}

\section{Introduction}\label{sec-intro}
	%\documentclass[uplatex,dvipdfmx,11pt]{article}%a4paper
%\usepackage{docmute}
%\usepackage[margin=25mm]{geometry}

%\input{preemble}%dviと数学記号パッケージ(\qed)と文字色(red/memo)とenumerateと定理環境と式番号の設定
%\input{commands}%省エネコマンド

%\begin{document}
The aim of this article is to investigate the collision property of two independent simple random walks 
	on the three-dimensional uniform spanning tree. 
Let us first begin with the introduction of uniform spanning forests on $\zd$. 
 If $G_n$ is a sequence of finite subgraphs which exhausts $\zd$, then it is proved by Pemantle \cite{P91} that
	the sequence of the uniform spanning measures on $G_n$  converges weakly to a 
	probability measure which is supported on the set of spanning forests of $\zd$. 
Pemantle \cite{P91} also showed that the uniform spanning forest is a single tree almost surely, which is called the uniform spanning tree (UST) 
	on $\zd$, for $d\le 4$, while it is not a tree but a spanning forest with infinitely many connected components when $d\ge 5$. 
Since their introduction, study of uniform spanning forests has played an important role in the 
	progress of probability theory, because of its connection to various probabilistic models, see  \cite{BLPS01} for details.

The behavior of random walks on the uniform spanning forests on $\zd$ strongly depends on the 
	dimension $d$ in terms of spectral and geometric properties of the forests. 
In particular, it is proved that the random walk exhibits mean-field behavior for $d\ge 4$, 
	with precise logarithmic corrections in $d=4$ \cites{Hal-Hut, Hut-2}. 
On the other hand, for $d\le 3$, different exponents appear in the asymptotic behavior of
	several quantities such as transition density and mean-square displacement of the 
	random walk \cites{ACHS20, BM}. 
This is already confirmed at least for $d=2$ and it is strongly believed that this is the case in three dimensions. 
Further detailed estimates on the random walk on the uniform spanning tree have been established 
	for $d=2$ \cite{BCK21} and $d=3$ \cite{SWarx}. 

In this article, we will estimate the number of collisions of two independent random walks on the three-dimensional 
	uniform spanning tree. 
To be more precise, let us introduce some terminology here. 
For infinite connected recurrent graph $G$, let $X$ and $Y$ be independent (discrete time) 
	simple random walks on $G$. 
We say that $G$ has the infinite collision property 
	when $|\{n\mathrel{:}X_n=Y_n\}|=\infty$ holds almost surely, where $|A|$ denotes the cardinality of $A$. 
For classical examples such as $\mathbb{Z}$ and $\mathbb{Z}^2$, it is easy to see that 
	two independent simple random walks collide infinitely often. 
On the other hand, Krishnapur and Peres \cite{Kris-Peres} gave an example of a recurrent graph 
	for which the number of collision is almost surely finite. 
For collisions on random graphs, Barlow, Peres and Sousi \cite{collision} proved that 
	a critical Galton-Watson tree, the incipient infinite cluster in high dimensions and 
	the uniform spanning tree on $\mathbb{Z}^2$ all have the infinite collision property almost surely. 
The infinite collision property of reversible random rooted graphs including uniform spanning trees on $\zd$ ($d\le 4$) 
	and every component of uniform spanning forests on $\zd$ ($d\ge 5$) was proved in \cite{Hut-Peres}. 
The purpose of this article is to give a quantitative estimate of the number of collisions until two random walks exit a ball of 
	the three-dimensional UST, which was not dealt with in \cite{Hut-Peres}.  
Collisions of random walks on various graphs can be useful to study complex networks, 
	and we hope it yields new discernment for industry and mathematics. 
The infinite collision property is a relatively new concept in contrast to recurrence/transience. 
It would be worth investigating its basic properties and application to wide areas and 
	giving various examples. 

%===主定理など===
Now we state the main result of this article. 
Let $\ust$ be the uniform spanning tree on $\zzz$ and $\prob$ be its law. 
Let $\widetilde{X}$ and $\widetilde{Y}$ be two independent simple random walks on $\ust$ killed when they exit 
	the intrinsic ball of $\ust$ of radius $r$. 
We denote by $P$ the law of $(\widetilde{X},\widetilde{Y})$ started at $(0,0)$ 
	and by $E$ the corresponding expectation. 
Let $Z_{B_r}$ be the total number of collisions of $\widetilde{X}$ and $\widetilde{Y}$ 
	(see Section \ref{section-2.4} for the precise definition). 
\begin{theo}\label{thm-first}
	For any $r\ge 1$ and $\vep>0$, there exist some universal constant $C>0$, $c>0$ and 
		some event $K(r,\vep)$ with $\prob(K(r,\vep))\ge 1-C\vep^c$ such that on $K(r,\vep)$,
	\begin{align}
		\vep r\le E(Z_{B_r}) \le 6r,	\label{1st-moment}	\\
		E(Z_{B_r}^2)\le 144r^2+6r,	\label{2nd-moment}
	\end{align}
	holds. In particular, on $K(r,\vep)$ we have 
	\begin{equation}\label{thm-e-1}
		P(\vep r \le Z_{B_r}\le 72\vep^{-2} r)\ge \vep^2/12.
	\end{equation}
\end{theo}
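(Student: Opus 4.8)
The plan is to deduce \eqref{thm-e-1} from the two moment estimates \eqref{1st-moment}--\eqref{2nd-moment} by the classical second-moment method. I would split the target interval through the elementary inequality
\[
P(\vep r\le Z_{B_r}\le 72\vep^{-2}r)\ \ge\ P(Z_{B_r}\ge \vep r)-P(Z_{B_r}> 72\vep^{-2}r),
\]
so that it suffices to bound the upper tail above by $\vep^2/12$ and the lower tail below by $\vep^2/6$; on $K(r,\vep)$ both inputs are available.

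The upper tail is immediate from Markov's inequality and the first-moment upper bound in \eqref{1st-moment}: on $K(r,\vep)$,
\[
P(Z_{B_r}> 72\vep^{-2}r)\ \le\ \frac{E(Z_{B_r})}{72\vep^{-2}r}\ \le\ \frac{6r}{72\vep^{-2}r}\ =\ \frac{\vep^2}{12}.
\]
For the lower tail I would use a truncated first-moment (Paley--Zygmund) argument: writing $E(Z_{B_r})=E(Z_{B_r};Z_{B_r}<\vep r)+E(Z_{B_r};Z_{B_r}\ge\vep r)$, bounding the first term by $\vep r$ and the second by $\sqrt{E(Z_{B_r}^2)}\,\sqrt{P(Z_{B_r}\ge\vep r)}$ via Cauchy--Schwarz, and solving, gives
\[
P(Z_{B_r}\ge\vep r)\ \ge\ \frac{\bigl(E(Z_{B_r})-\vep r\bigr)_+^2}{E(Z_{B_r}^2)}.
\]
Feeding in \eqref{1st-moment} and \eqref{2nd-moment} (using $144r^2+6r\le150r^2$ for $r\ge1$) should produce a bound of order $\vep^2$, and subtracting the upper-tail estimate then yields \eqref{thm-e-1}.

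The main obstacle is precisely the lower tail, because the threshold $\vep r$ in \eqref{thm-e-1} coincides with the lower bound supplied by \eqref{1st-moment}: if the mean $E(Z_{B_r})$ were only as large as $\vep r$, the numerator $\bigl(E(Z_{B_r})-\vep r\bigr)_+^2$ would collapse and the estimate would be vacuous. The quantitative content I must extract is that, on $K(r,\vep)$, the mean exceeds the threshold $\vep r$ with a definite margin and that \eqref{2nd-moment} prevents $Z_{B_r}$ from placing its mass only on scales far larger than $r$---equivalently, that the ratio $E(Z_{B_r})^2/E(Z_{B_r}^2)$ is bounded below by a quantity of order $\vep^2$ (and by a universal constant once the mean is genuinely of order $r$). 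Tracking the explicit constants so that the resulting lower bound is at least $\vep^2/6$, rather than merely of order $\vep^2$, and thereby strictly beats the $\vep^2/12$ lost to the upper tail, is the delicate point where the precise numerical form of \eqref{1st-moment}--\eqref{2nd-moment} is indispensable, and is where I would concentrate the effort.
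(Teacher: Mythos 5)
Your reduction of \eqref{thm-e-1} to a lower tail minus an upper tail, and your Markov bound $P(Z_{B_r}>72\vep^{-2}r)\le 6r/(72\vep^{-2}r)=\vep^2/12$, coincide exactly with the paper's final step. But the lower tail is a genuine gap, and the difficulty you flag at the end is fatal to the route you chose: \eqref{thm-e-1} does \emph{not} follow from \eqref{1st-moment}--\eqref{2nd-moment} treated as black boxes. Concretely, a random variable taking the value $\vep r(1-\eta)$ with probability $1-p$ and a value $M>72\vep^{-2}r$ with probability $p$, tuned so that the mean is exactly $\vep r$ and the second moment is at most $144r^2$ (take $M\asymp r/(\vep\eta)$, $p\asymp \vep^2\eta^2$), satisfies both displayed moment bounds yet has $P(\vep r\le Z\le 72\vep^{-2}r)=0$, and $p\to0$ as $\eta\to0$, so not even $P(Z\ge\vep r)$ admits a uniform lower bound. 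Your truncated Paley--Zygmund inequality $P(Z\ge \vep r)\ge (E Z-\vep r)_+^2/E(Z^2)$ is correct but, as you yourself observe, vacuous when $EZ$ is near $\vep r$; the needed margin cannot be ``extracted'' from the stated inequalities and must instead be built into the event $K(r,\vep)$ --- which your proposal never constructs, although constructing it (and proving the probability bound $\prob(K(r,\vep))\ge 1-C\vep^c$ and the moment bounds themselves) is the bulk of the theorem.

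This is exactly how the paper resolves it. The event is the resistance event $\widetilde{K}(r,\lambda)=\{\Reff{0}{B_\ust(0,r)^c}\ge r/\lambda\}$ of \eqref{proof2-1}; the estimate $\prob(\widetilde{K}(r,\lambda))\ge 1-C\lambda^{-c}$ rests on Theorem \ref{effEball} (Wilson's algorithm, hittability of LERW branches, the lower tail of the LERW length) combined with \cite[Proposition 4.1]{ACHS20} to pass from Euclidean to intrinsic balls --- none of which appears in your proposal. Granted the event, the identity \eqref{Reff-G} and the bound \eqref{reff-g-2} give $E(Z_{B_r})\ge r/(2\lambda)$ on $\widetilde{K}(r,\lambda)$, and the choice $\lambda=\vep^{-1}/12$ yields $E(Z_{B_r})\ge 6\vep r$: a factor-$6$ margin above the threshold $\vep r$. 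Paley--Zygmund at level $\tfrac16 E(Z_{B_r})$ then gives, using $E(Z_{B_r}^2)\le 144r^2+6r\le 150r^2$,
\begin{equation*}
P\bigl(Z_{B_r}\ge \vep r\bigr)\ \ge\ P\Bigl(Z_{B_r}\ge \tfrac16 E(Z_{B_r})\Bigr)\ \ge\ \frac{25}{36}\cdot\frac{E(Z_{B_r})^2}{E(Z_{B_r}^2)}\ \ge\ \frac{25}{36}\cdot\frac{36\vep^2 r^2}{150 r^2}\ =\ \frac{\vep^2}{6},
\end{equation*}
and subtracting your upper tail $\vep^2/12$ gives \eqref{thm-e-1}. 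In other words, the displayed bound $\vep r\le E(Z_{B_r})$ in the statement is a deliberately weakened record of what the event actually delivers, and the ``in particular'' is a consequence of the construction, not a formal consequence of \eqref{1st-moment}--\eqref{2nd-moment}. To repair your argument you must (i) define $K(r,\vep)$ as the resistance event above and prove its probability bound, and (ii) carry the stronger mean bound $E(Z_{B_r})\ge 6\vep r$ (rather than $\vep r$) into the second-moment step; the second moment bound \eqref{2nd-moment} itself holds for every realization via $\Reff{x}{B_\ust(0,r)^c}\le 2r$ and \eqref{reff-g-3}.
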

%We give another proof of the infinite collision property of the three-dimensional UST as a corollary of Theorem \ref{thm-first}. 
The infinite collision property of the three-dimensional UST directly follows from Theorem \ref{thm-first}. 
\begin{cor}\label{mainthm}
	The uniform spanning tree on $\zzz$ has the infinite collision property $\prob$-a.s. 
\end{cor}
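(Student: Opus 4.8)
The plan is to deduce the corollary from the lower tail \eqref{thm-e-1} by sending the radius to infinity, and then to promote a \emph{positive} quenched collision probability to probability one by means of a zero--one law. Throughout, let $X,Y$ be two independent simple random walks on $\ust$ started at $0$ \emph{without} killing, coupled with $\widetilde X,\widetilde Y$ so that each pair agrees until the exit from the intrinsic ball $B_r$, and let $Z$ be the total number of collisions of $X$ and $Y$. Since every collision of the killed walks (which requires both still to be inside $B_r$) is also a collision of $X$ and $Y$, we have $Z\ge Z_{B_r}$ for every $r$, and therefore $P(Z\ge \vep r)\ge P(Z_{B_r}\ge \vep r)$, which on $K(r,\vep)$ is at least $\vep^2/12$ by \eqref{thm-e-1}.

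First I would fix $\vep$ and a sequence $r_k\uparrow\infty$ and apply the reverse Fatou lemma to the environment:
\begin{equation*}
	\prob\Bigl(\limsup_{k}K(r_k,\vep)\Bigr)\ \ge\ \limsup_{k}\prob(K(r_k,\vep))\ \ge\ 1-C\vep^{c}.
\end{equation*}
On $\limsup_k K(r_k,\vep)$ infinitely many of the events $K(r_k,\vep)$ occur, so $P(Z\ge \vep r_k)\ge \vep^2/12$ for infinitely many $k$. Because $\vep r_k\uparrow\infty$, the numbers $a_k:=P(Z\ge \vep r_k)$ decrease to $P(Z=\infty)$; a decreasing sequence having infinitely many terms $\ge \vep^2/12$ must have \emph{all} its terms $\ge \vep^2/12$, whence $P(Z=\infty)\ge \vep^2/12>0$ on this event. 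Consequently $\prob\bigl(P(Z=\infty)>0\bigr)\ge 1-C\vep^{c}$ for every $\vep>0$, and letting $\vep\downarrow 0$ yields $P(Z=\infty)>0$ for $\prob$-a.e.\ realization of $\ust$. This step uses only Theorem \ref{thm-first}.

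It remains to upgrade positivity to $P(Z=\infty)=1$ $\prob$-a.s., which is precisely the infinite collision property. The event $\{Z=\infty\}$ is a tail event for the pair $(X_n,Y_n)$, so $h(x,y):=P_{(x,y)}(Z=\infty)$ is a bounded harmonic function for the product walk and $h(X_n,Y_n)\to\mathbf 1_{\{Z=\infty\}}$ almost surely; harmonicity together with irreducibility of the product chain on the parity class of $(0,0)$ shows that $h(0,0)>0$ forces $h>0$ everywhere. The hard part will be the passage from $h>0$ to $h\equiv 1$: it cannot be extracted from the quantitative estimate alone, since two independent recurrent walks need not meet at any prescribed vertex infinitely often, so the naive attempt to force $h(0,Y_{n})\to h(0,0)>0$ along return times of $X$ to $0$ fails. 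I would instead obtain the required triviality from a zero--one law for the collision event, exploiting the stationarity and ergodicity of $\ust$ seen from the two walkers in the unimodular framework of \cite{Hut-Peres} (alternatively, the triviality argument for recurrent graphs of \cite{collision}). Granting this, $P(Z=\infty)\in\{0,1\}$ $\prob$-a.s., and combining with the positivity established above we conclude $P(Z=\infty)=1$ $\prob$-a.s., which proves Corollary \ref{mainthm}.
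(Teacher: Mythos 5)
Your proposal is correct and takes essentially the same route as the paper: the monotone coupling $Z\ge Z_{B_r}$, the lower bound \eqref{thm-e-1} along radii $r\to\infty$ to get $P_{0,0}(Z=\infty)\ge \vep^2/12>0$ on a high-probability environment event, then $\vep\downarrow 0$, with the upgrade from positivity to probability one supplied by the quenched zero--one law for recurrent graphs (Proposition 2.1 of \cite{collision}, stated in Section \ref{section-2.4}), which the paper invokes implicitly via the a.s.\ recurrence of $\ust$ from \cite{BLPS01}. If anything, your reverse-Fatou/$\limsup_k K(r_k,\vep)$ treatment of the $r$-dependence of the environment event is slightly more careful than the paper's own wording, which fixes a single $\omega\in K(r,\vep)$ while using all radii $\vep^{-1}N$ at once.
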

\begin{remark}
	Note that the above statement includes two different probability measures, the law of the 
		three-dimensional UST and that of random walks on it. 
	Corollary \ref{mainthm} claims that if we choose a tree according to the law of the 
		three-dimensional UST and check whether two independent simple 
		random walks on the tree collide infinitely often almost surely, then it has the  infinite collision property 
		almost surely with respect to UST measure. 
\end{remark}
\begin{remark}
	In \cite{Hut-Peres}, it is proved that the uniform spanning tree on $\zd~(d=3,4)$ and 
		each connected component of the uniform spanning forest on $\zd~(d\ge 5)$ have 
		the infinite collision property. 
	In Section \ref{sec-thm} of this article, we will derive Corollary \ref{mainthm} from 
		Theorem \ref{thm-first}, which gives another proof for the three-dimensional case. 
	%In \cite{Hut-Peres}, it is proved that the uniform spanning tree on $\mathbb{Z}^4$ and each connected component of the uniform spanning forest on $\zd~(d\ge 5)$ have the infinite collision property. 
	We expect that quantitative moment estimates of the number of collisions for the case $d\ge 4$
		can also be derived from various estimates obtained in \cite{Hal-Hut} and \cite{Hut-2}. 
	We will not pursue this further in the present article. 
\end{remark}

Let us briefly explain the strategy of the proof of the Theorem \ref{thm-first}. 
In order to bound the moments of $Z_{B_r}$, we will rewrite it in terms of the effective resistance of the three-dimensional UST, 
	which can be derived from some geometric properties of graphs. 
We will construct a ``good'' event and demonstrate that the three-dimensional UST exhibits such properties with 
	high probability. 

%===論文の構成===
Before we end this section, let us explain the organization of this article. 
General notation together with backgrounds of the three-dimensional UST 
	and collisions of random walks will be introduced in Section \ref{sec-defi}. 
Then Theorem \ref{thm-first} and Corollary \ref{mainthm} will be proved in Section \ref{sec-thm}. 

\vspace{2ex}
\noindent \textbf{Acknowledgements:}
The author would like to thank Professor Daisuke~Shiraishi for helpful discussions on the proof of the main theorem and 
	 a careful reading of the article. 
The author would also like to thank Professor David~A.~Croydon for valuable comments on the analysis of uniform spanning trees. 
The author thank Professor Yuval~Peres for informing the author of their paper on the infinite collision 
	property \cite{Hut-Peres}.
The author is supported by JST, the Establishment of University Fellowships Towards the Creation of Science Technology Innovation, Grant Number JPMJFS2123.
%\end{document}

\section{Definitions and backgrounds}\label{sec-defi}
	%\documentclass[uplatex,dvipdfmx,a4paper,11pt]{article}
%\usepackage[margin=25mm]{geometry}
%\input{preemble}
%\input{commands}

%\begin{document}
In this section, we introduce the uniform spanning tree on connected graphs and an algorithm to construct the uniform 
	spanning tree on $\zzz$ with loop-erased random walk paths. 

First we introduce some notation for subsets of $\zzz$. 
For a set $A\subset\zzz$, let 
\begin{align*}
	\partial_i A&=\{x\in A\mathrel{:} \mbox{there exists }y\in\zzz\setminus A \mbox{ such that }d_E(x,y)=1\},	\\	
	\partial A&=\{x\in\zzz\setminus A\mathrel{:} \mbox{there exists }y\in A \mbox{ such that }d_E(x,y)=1\}.
\end{align*}
be the inner and outer boundary of $A$, respectively. 

For two points $x,y\in\zzz$, we let $d_E(x,y)$ be the Euclidean distance between $x$ and $y$. 
For $x\in\zzz$ and a connected subset $A\subset\zzz$, 
	we let $\mathrm{dist}(x,A)=\inf\{d_E(x,y) \mathrel{:} y\in A\}$. 

%path
For $x,y\in\zzz$, we write $x\sim y$ if $d_E(x,y)=1$. 
A finite or infinite sequence of vertices $\theta=(\theta_0,\theta_1,\cdots)$ is called
	a \textbf{path} if $\theta_{i-1}\sim \theta_i$ for all $i=1,2,\cdots$. 
For a finite path $\theta=(\theta_0,\cdots,\theta_k)$, 
	we define the length of $\theta$ to be $\mathrm{len}(\theta)=k$. 

%===Subsection:UST===
\subsection{Uniform spanning tree}
In this subsection, we introduce the three-dimensional uniform spanning tree, 
	the model of interest of this article. 

A subgraph of a connected graph $G$ is called a \textbf{spanning tree} on $G$ 
	if it is connected and without cycle, and its vertex set is the same as that of $G$. 
If we denote by $\mathcal{T}(G)$ the set of all spanning trees on $G$, then for a finite (connected) graph $G$, 
	$\mathcal{T}(G)$ is also a finite set. 
In this case, a random tree according to the uniform measure on $\mathcal{T}(G)$ is called 
	the \textbf{uniform spanning tree (UST)} on a finite graph $G$. 
For $\zzz$, we can define the uniform spanning tree as the weak limit of the USTs on the 
	finite boxes $\zzz\cap [-n,n]^3$, see \cite{P91}. 
The uniform spanning tree on $\zzz$ is also called the three-dimensional uniform spanning tree. 

Let $(\Omega,\mathcal{F},\prob)$ be the probability space where the three-dimensional UST $\ust$ is defined 
	and let $\mathbf{E}$ be the corresponding expectation. 
Note that $\ust$ is a one-ended tree $\prob$-a.s. (see \cite{P91}). 
For any $x,y\in\zzz$, we write $\gamma(x,y)$ for the unique self-avoiding path from $x$ to $y$ in $\ust$. 
	For $x\in\zzz$ and a connected subset $A\subset\zzz$, we denote by $\gamma(x,A)$ 
	the shortest path among $\{\gamma(x,y)\mathrel{:}y\in A\}$ if $x\not\in A$ and 
	$\gamma(x,A)=\{x\}$ if $x\in A$. 
We let $\gamma(x,\infty)$ be the unique infinite self-avoiding path from $x$ in $\ust$. 
We denote the intrinsic metric on the graph $\ust$ by $d_\ust$, 
	\ie $d_\ust(x,y)=\mathrm{len}(\gamma(x,y))$. 

We define intrinsic balls of $\ust$ by
\begin{equation}\label{intrball}
	B_\ust(x,r)=\{y\in\zzz \mathrel{:} d_\ust(x,y)\le r\}.
\end{equation}
Recall that $d_E$ stands for the Euclidean metric on $\zzz$. We denote Euclidean balls by 
\begin{equation}
	B(x,r)=\{y\in\zzz \mathrel{:} d_E(x,y)\le r \}.%,
\end{equation}

Now let us define the simple random walk on $\ust$. 
If we let $\mu_\ust(\{x\})$ be the number of edges of $\ust$ which contain $x$, then $\mu_\ust$ 
	gives a measure on $\zzz$.  
For a given realization of $\ust$, the simple random walk on $\ust$ is defined as the discrete time 
	Markov process $X^\ust=((X_n^\ust)_{n\ge 0},(P_x^\ust)_{x\in\zzz})$ 
	which jumps from its current location to a uniformly chosen neighbor in $\ust$ at each step. 
For $x\in \zzz$, we call $P_x^\ust$ the \textbf{quenched law} of the simple random walk on $\ust$. 

%===Subsection:LERWとWA===
\subsection{Loop-erased random walk}
Now we define the loop-erased random walk, which plays an important role in the analysis on uniform spanning trees through an algorithm called Wilson's algorithm.  

For a finite path $\theta$, let us define the chronological loop erasure $\mathrm{LE}(\theta)$ of 
	$\theta$ as follows.
Let 
\[
	T(0)=\sup\{j\mathrel{:} \theta_j=\theta_0\}
\]
and $\widetilde{\theta}_0=\theta_{T(0)}$. 
Next we set 
\begin{equation}\label{defLE}
	T(i)=\sup\{j\mathrel{:}\theta_j=\theta_{T(i-1)+1}\},\ \ \ 
	\widetilde{\theta}_i=\theta_{T(i)},
\end{equation}
inductively. Finally let 
\[
	l=\inf\{j\mathrel{:}T(j)=k\}.
\]
Then $\mathrm{LE}(\theta)$ is defined by 
\[
	\mathrm{LE}(\theta)=(\widetilde{\theta}_0,\cdots,\widetilde{\theta}_l). 
\]
The loop erasure of a finite path $\theta$ is a simple path contained in $\theta$ and its 
	starting point and end point are the same as those of $\theta$. 
	
The \textbf{loop-erased random walk (LERW)} is the random simple path obtained as the 
	loop erasure of simple random walk path. 
The exact same definition as the loop erasure for finite paths is also applied to the infinite simple random walk (SRW) $S$ on $\zzz$.
Since $S$ is transient, the times $T(i)$ in (\ref{defLE}) are finite for every $i\in\mathbb{Z}$, 
	almost surely. 
The \textbf{infinite loop-erased random walk (ILERW)} is defined as the infinite random simple path $\mathrm{LE}(S)$. 

%growth exponent
Next we introduce the growth exponent of the three-dimensional LERW, 
	which represents the time-space scaling of the LERW. 
We run the SRW on $\zzz$ started at the origin until the first exiting time of the ball of radius $n$ 
	centered at its starting point. 
Let $M_n$ be the length of the loop erasure of this SRW path. 
We denote the law of $S$ and the corresponding expectation by $P$ and $E$, respectively. 
If the limit
\begin{equation}\label{growthbeta}
	\beta\coloneqq\lim_{n\to\infty}\frac{\log E(M_n)}{\log n},
\end{equation}
exists, then this constant $\beta$ is called the growth exponent of the LERW. 
The existence of the limit is proved in \cite{S18} and that $\beta\in(1,5/3]$ is obtained in \cite{L99}. 
Although the exact value of $\beta$ has not been discovered yet, it is estimated that $\beta=1.624\cdots$ 
	by numerical calculations, see \cite{W10}. 
Moreover, following exponential tail bounds of $M_n$ are obtained in \cite{S18}. 
\begin{theo}(\cite[Theorem 1.1.4]{S18})
There exists $c>0$ such that for all $n\ge 1$ and $\kappa\ge 1$,
\[
	\prob(M_n\ge \kappa E(M_n))\le 2\exp\{-c\kappa\},
\]
and for any $\vep\in(0,1)$, there exist $0<c_\vep,C_\vep<\infty$ such that for all $n\ge 1$ and $\kappa\ge 1$, 
\begin{equation}\label{tail}
	P(M_n\le \kappa^{-1}E(M_n))\le C_\vep\exp\{-c_\vep\kappa^{\frac{1}{\beta}-\vep} \}.
\end{equation}
\end{theo}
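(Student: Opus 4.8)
The plan is to reduce both tail bounds to a single decomposition of $M_n$ across dyadic scales, together with quasi-independence estimates for loop-erased walk. Write $\hat\gamma=\mathrm{LE}(S[0,\tau_n])$ for the loop erasure of the walk stopped at the first exit $\tau_n$ of $B(0,n)$, and for $1\le k\le\log_2 n$ let $L_k$ be the number of vertices of $\hat\gamma$ in the dyadic shell $B(0,2^k)\setminus B(0,2^{k-1})$, so that $M_n=\sum_k L_k$ up to a bounded boundary term. The one-scale estimates underlying the existence of $\beta$ give $E[L_k]\asymp 2^{k\beta}$, so the sum is dominated by its top scales and $E(M_n)\asymp n^{\beta}$. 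The one genuine difficulty throughout is that loop-erasure is \emph{not} a local operation: excursions of $S$ outside a shell can erase portions of $\hat\gamma$ inside it, so the $L_k$ are not independent. I would control this exactly as in the construction of $\beta$, via the domain Markov property of LERW under Wilson's algorithm and the quasi-multiplicativity of loop-erased walk (in the sense of Masson and Shiraishi), which asserts that non-intersection probabilities of LERW segments across well-separated scales factorize up to universal constants.

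For the upper bound I would estimate the exponential moment $E[\exp(\theta M_n/E(M_n))]$ for a small fixed $\theta>0$, uniformly in $n$; a Markov inequality then yields $\prob(M_n\ge\kappa E(M_n))\le 2\exp(-c\kappa)$. The estimate proceeds by induction over scales: conditionally on $\hat\gamma$ up to its last entrance into shell $k$, the number of further returns of $S$ to that shell is dominated by a geometric random variable (by transience of $S$ and the separation lemmas), and each return adds only a controlled amount of length, which yields a uniform bound on $E[\exp(\theta L_k/2^{k\beta})]$. Because the scale means $2^{k\beta}$ grow geometrically and the total is dominated by the top scale, these per-scale bounds combine, via Hölder's inequality and the quasi-independence of separated scales, into the desired uniform control of $E[\exp(\theta M_n/E(M_n))]$.

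For the lower tail I would use a multiscale product argument that explains the exponent $1/\beta$. Set $w=\kappa^{-1/\beta}n$ and partition the outer annulus $B(0,n)\setminus B(0,n/2)$ into $N\asymp\kappa^{1/\beta}$ sub-annuli of Euclidean width $w$; each sits at radius comparable to $n$ and so carries expected LERW length $\asymp w\,n^{\beta-1}\asymp\kappa^{-1/\beta}n^{\beta}$, whence the $N$ means sum to a constant multiple of $E(M_n)$. On the event $\{M_n\le\kappa^{-1}E(M_n)\}$ the total length deposited in these sub-annuli is at most $\kappa^{-1}E(M_n)$, and a counting argument forces at least $N/2$ of them to be \emph{short}, \ie to carry length below a small fixed fraction $\eta$ of their mean, once $\kappa$ exceeds a constant. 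A one-scale lower-tail estimate shows each sub-annulus is short with probability at most some $p=p(\eta)<1$, and the quasi-independence of well-separated scales bounds the chance that $N/2$ of them are simultaneously short by roughly $p^{cN}=\exp(-c\kappa^{1/\beta})$. Crucially, the quasi-independence is only up to a multiplicative constant per scale, and accumulating $N$ such constants degrades the exponent from $\kappa^{1/\beta}$ to $\kappa^{1/\beta-\vep}$, which is the origin of the $\vep$ in (\ref{tail}).

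The main obstacle in both parts is the same: converting the heuristic independence of disjoint scales into rigorous estimates despite the non-local nature of loop-erasure. This is precisely where the quasi-multiplicativity and separation machinery for three-dimensional LERW is indispensable, and it is also what prevents the lower-tail exponent from attaining the clean value $1/\beta$.
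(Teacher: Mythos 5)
This statement is not proved in the paper at all: it is imported verbatim from Shiraishi \cite[Theorem 1.1.4]{S18}, and the paper's ``proof'' is the citation. So there is no internal argument to compare your proposal against; what can be assessed is whether your sketch would plausibly reconstruct the proof in \cite{S18}. At the level of strategy it is in the right family: the actual proof adapts the Barlow--Masson two-dimensional scheme, using the domain Markov property of LERW and hittability/separation estimates, with an upper tail obtained by iterating along the path (conditionally on the loop erasure so far, the probability that the continuation accumulates another $E(M_n)$ of length before exiting $B(0,n)$ is at most some $p<1$, giving a geometric, hence exponential, tail) and a lower tail obtained from a multiscale ``all annuli must be short'' argument producing the exponent $\kappa^{1/\beta-\vep}$. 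Your outline captures this shape, and you correctly identify the non-locality of loop erasure as the central obstacle.

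However, as a proof your proposal has concrete gaps. First, you repeatedly use $E(M_n)\asymp n^\beta$ and $E[L_k]\asymp 2^{k\beta}$ up to constants; the existence of the limit (\ref{growthbeta}) in \cite{S18} only yields $E(M_n)=n^{\beta+o(1)}$, and up-to-constants comparability across scales is precisely what is \emph{not} available there. These subpolynomial corrections, when the scale ratio is a power of $\kappa$, are the genuine source of the $\vep$-loss in (\ref{tail}). Second, your stated explanation of the $\vep$ --- that accumulating a multiplicative constant per scale ``degrades the exponent from $\kappa^{1/\beta}$ to $\kappa^{1/\beta-\vep}$'' --- cannot be right: a bound of the form $C^Np^N$ either remains exponential in $N$ (when $p<1/C$, possibly after shrinking the per-annulus threshold $\eta$ so that $p(\eta)$ is small) or collapses entirely; it never produces a polynomial degradation of the power of $\kappa$. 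Third, your $N\asymp\kappa^{1/\beta}$ sub-annuli sit adjacent to one another at a common radius, so they are not ``well-separated scales,'' and quasi-multiplicativity for separated scales does not directly give the factorization you invoke; one needs the domain Markov property together with the hittability estimates of the kind the present paper quotes from \cite{SaSh18}, applied sequentially along the path. Finally, in the upper tail, the claim that conditionally each return to a shell ``adds only a controlled amount of length'' ignores that a later excursion may \emph{erase} previously counted length, which is the very non-locality you flag; the erasure only helps for upper bounds on $M_n$ if one conditions on the loop erasure of the whole past trajectory, which is the Barlow--Masson iteration, not a per-shell exponential-moment bound. In short: right skeleton, but the quantitative inputs you assume are stronger than what \cite{S18} provides, and the mechanism you propose for the $\vep$-loss is incorrect.
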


\subsection{Wilson's algorithm}\label{subsec-WA}
Now let us recall Wilson's algorithm. 
Throughout the article, we denote by $S^z=\{S^z(n)\}_{n=0}^\infty$ the simple random walk on $\zzz$ 
	started at $z\in\zzz$ and denote its law by $P^z$.
We take $\{S^z\}_{z\in\zzz}$ to be independent. 

Wilson's algorithm is a method to construct UST with LERW paths. 
It was first established for finite graphs (see \cite{W96}) and then extended to 
	transient $\zd$ including $\zzz$ (see \cite{BLPS01}). 
Here we introduce Wilson's algorithm of the transient case. 
Let $\{v_1,v_2,\cdots\}\subset\zzz$ be an ordering of the vertices of $\zzz$ and 
	denote by $\gamfty$ the infinite LERW independent of $\{S^z\}$, started at the origin. 
For a path $\theta$ and a set $A\subset\zzz$, let 
\begin{equation}\label{timetau}
	\tau(A)=\tau_\theta(A)=\min\{i\ge 0\mathrel{:}\theta_i\in A\},
\end{equation}
be the first hitting time of the set $A$ for the path $\gamma$. 
We define a sequence $\{\ust_i\}$ of random subgraphs of $\zzz$ inductively by 
\begin{align*}
	\ust_0&=\gamfty,	\\
	\ust_i&=\ust_{i-1}\cup\mathrm{LE}(S^{z_i}[0,\tau_{S^{z_i}}(\ust_{i-1})]),\ i\ge 1,
\end{align*}
and finally let $\ust_\infty=\cup_i \ust_i$.
Then by \cite{BLPS01}, the law of the resulting random tree $\ust_\infty$ is the same as 
	that of the three-dimensional UST. 
It follows from this fact that the law of $\ust_\infty$ does not depend on the ordering 
	$\{v_1,v_2,\cdots\}$ of $\zzz$.

%===Subsection:Infinite Collision Property===
%\subsection{Infinite collision property}
\subsection{Effective Resistance and Green's function}\label{section-2.4}
Finally we define the infinite collision property and introduce its connection to effective resistance 
	and Green's function. 

Let $G=(V,E)$ be a connected graph and let $X=\{X_n\}_{n=0}^\infty$ and 
	$Y=\{Y_n\}_{n=0}^\infty$ be independent discrete time simple random walks on $G$. 
For $x,y\in V$, we write $x\sim y$ if $x$ and $y$ are connected with an edge, \ie $\{x,y\}\in E$.  
We denote by $P_{a,b}$ the law of $\{(X_n,Y_n)\}_{n=0}^\infty$ with starting point 
	$(X_0,Y_0)=(a,b)$. 
\begin{defi}
	We define the total number of collisions between $X$ and $Y$ by 
	\[
		Z=\sum_{n=0}^\infty \mathbf{1}(X_n=Y_n). 
	\]
	Let $B$ be a connected subgraph of G and let $X^B=\{X_n^B\}_{n=0}^\infty$ and 
		$Y^B=\{Y_n^B\}_{n=0}^\infty$ be independent discrete time simple random walks on $G$ 
		killed when they exit $B$. 
	We define the total number of collisions of $X^B$ and $Y^B$ by 
	\begin{equation}\label{defZB}
		Z_B=\sum_{n=0}^\infty \mathbf{1}(X_n^B=Y_n^B). 
	\end{equation}
\end{defi}
\begin{defi}
	If 
	\begin{equation}\label{fcolli}
		P_{a,a}(Z<\infty)=1,
	\end{equation}
	holds for all $a\in G$, then $G$ has the \textbf{finite collision property}. 
	If 
	\begin{equation}\label{infcolli}
		P_{a,a}(Z=\infty)=1,
	\end{equation}
	holds for all $a\in G$, then $G$ has the \textbf{infinite collision property}. 
\end{defi}

\begin{remark}
	There is no simple monotonicity property for collisions. 
	Let $\mathrm{Comb}(\mathbb{Z})$ be the graph with vertex set $\mathbb{Z}\times\mathbb{Z}$
		and edge set 
	\begin{equation*}%{align*}
		\{[(x,n),(x,m)]\mathrel{:}|m-n|=1\}\cup\{[(x,0),(y,0)]\mathrel{:}|x-y|=1\}.
	\end{equation*}%{align*}
	Then $\mathrm{Comb}(\mathbb{Z})$ has the finite collision property (see \cite{collision}) and is a subgraph 
		of $\mathbb{Z}^2$, which has the infinite collision property. 
\end{remark}

It is proved that for any connected graph, either $(\ref{fcolli})$ or (\ref{infcolli}) holds. 
\begin{prop}(\cite[Proposition 2.1]{collision})
	Let $G$ be a (connected) recurrent graph. 
	Then for any starting point $(a,b)\in G\times G$ of the process $\{(X_n,Y_n)\}$, 
	\[
		P_{a,b}(Z=\infty)\in \{0,1\},
	\]
	holds. In particular, for all $a\in G$, either $P_{a,a}(Z=\infty)=0$ 
	or $P_{a,a}(Z=\infty)=1$ holds. 
\end{prop}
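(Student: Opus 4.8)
The plan is to analyse the bounded function $h(x,y):=P_{x,y}(Z=\infty)$ on $G\times G$ and to show that it takes only the values $0$ and $1$; since $P_{a,b}(Z=\infty)=h(a,b)$, this is exactly the assertion. First I would observe that the event $\{Z=\infty\}$ is invariant under the time shift $\theta$ of the pair process $W_n=(X_n,Y_n)$: writing $Z=\mathbf{1}(X_0=Y_0)+Z\circ\theta$ gives $\{Z=\infty\}=\theta^{-1}\{Z=\infty\}$. Hence, by the Markov property of $W$, $P_{a,b}(\{Z=\infty\}\mid\mathcal{F}_n)=h(X_n,Y_n)$, where $(\mathcal{F}_n)$ is the natural filtration of $W$, so $h$ is harmonic for $W$ and $(h(X_n,Y_n))_{n\ge0}$ is a bounded martingale. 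By L\'evy's $0$--$1$ law it converges, $h(X_n,Y_n)\to\mathbf{1}_{\{Z=\infty\}}$, $P_{a,b}$-a.s.

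The key step is to show that $y\mapsto h(a,y)$ is constant on the period class of $b$ (for a bipartite $G$ this is one of the two bipartite classes; otherwise it is the whole connected component). To see this, fix $y_1,y_2$ in the same period class and build an auxiliary coupling carrying a simple random walk $X$ started from $a$, together with simple random walks $Y^{(1)},Y^{(2)}$ started from $y_1,y_2$, with $X$ independent of $(Y^{(1)},Y^{(2)})$. Because $G$ is recurrent, simple random walk on $G$ is an irreducible recurrent chain and therefore has a trivial tail $\sigma$-field, and hence, by the coupling characterisation of tail triviality, there is a successful (coalescent) coupling of $Y^{(1)}$ and $Y^{(2)}$, \ie one under which $Y^{(1)}_n=Y^{(2)}_n$ for all $n$ beyond some a.s.\ finite time $T$. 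Setting $Z^{(i)}=\sum_n\mathbf{1}(X_n=Y^{(i)}_n)$, the two counts differ only through the finitely many terms with $n<T$, so $\{Z^{(1)}=\infty\}=\{Z^{(2)}=\infty\}$. Since $(X,Y^{(i)})$ has the law of two independent walks started from $(a,y_i)$, this yields $h(a,y_1)=h(a,y_2)$.

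To conclude, I would use recurrence once more. Since $X$ starts at $a$ and $G$ is recurrent, $X$ returns to $a$ at times $\sigma_1<\sigma_2<\cdots$ with $\sigma_k\uparrow\infty$ almost surely. In a bipartite graph each $\sigma_k$ is even, so $Y_{\sigma_k}$ remains in the period class of $b$; in the non-bipartite case there is nothing to check. By the previous step $h(a,Y_{\sigma_k})$ equals a single constant $c$ for every $k$, while $h(a,Y_{\sigma_k})=h(X_{\sigma_k},Y_{\sigma_k})\to\mathbf{1}_{\{Z=\infty\}}$ as a subsequence of the a.s.\ convergent martingale. Therefore $c=\mathbf{1}_{\{Z=\infty\}}$ almost surely, so $c\in\{0,1\}$ and $P_{a,b}(Z=\infty)=h(a,b)=c\in\{0,1\}$, as required.

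The main obstacle is precisely the constancy step. One cannot simply invoke triviality of the invariant $\sigma$-field of $W$, because the product chain $W=(X,Y)$ may be transient even when $G$ is recurrent (for instance $G=\mathbb{Z}^2$ yields a transient $W$ while collisions remain infinite), and transient chains can support nontrivial invariant events. The recurrence of $G$ must instead be exploited at the level of a single walk, through the tail-triviality/coupling input above, which is exactly what allows the second walk to be coalesced without disturbing the law of the pair.
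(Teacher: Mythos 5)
The paper itself contains no proof of this proposition: it is quoted verbatim from \cite{collision}*{Proposition 2.1}, so there is no internal argument to compare against. Judged on its own, your proof is correct and self-contained, and its skeleton --- shift-invariance of $\{Z=\infty\}$, the bounded martingale $h(X_n,Y_n)$, L\'evy's $0$--$1$ law, and exploiting recurrence through the return times of a single coordinate --- is the same frame as the argument in \cite{collision}. Your constancy step via a coalescent coupling of two copies of the second walk is where the real work happens, and your closing remark is exactly the right diagnosis: the product chain may be transient (as for $\mathbb{Z}^2$), so one cannot appeal to triviality of its invariant field, and recurrence must be used at the level of one walk.

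One assertion needs repair, although the proof survives it: it is \emph{not} true that every irreducible recurrent chain has a trivial tail $\sigma$-field. For SRW on a bipartite graph the tail $\sigma$-field is generated (mod null sets) by the bipartition classes (Blackwell--Freedman). The correct input is Orey's theorem: for an irreducible, aperiodic, recurrent chain, $\lVert \delta_{y_1}P^n-\delta_{y_2}P^n\rVert_{\mathrm{TV}}\to 0$; in the bipartite case apply it to the two-step chain restricted to one class (irreducible, aperiodic since it can backtrack, recurrent), and then the Griffeath--Goldstein characterization --- TV merging is equivalent to a successful exact coupling --- yields your coalescing pair, running the two copies together after the coupling time. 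Since you only invoke the coupling for $y_1,y_2$ in the same period class, and your parity bookkeeping (returns of $X$ to $a$ have even length, so $Y_{\sigma_k}$ stays in the class of $b$) is exactly right, substituting this class-restricted statement for ``trivial tail $\sigma$-field'' completes the proof.
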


Next we define effective resistance and Green's function, which we make use of to estimate $Z_B$. 
\begin{defi}\label{def-Reff}
	Let $G=(V,E)$ be a connected graph and let $f$ and $g$ be functions on $V$. 
	Then we define a quadratic form $\mathcal{E}$ by 
	\[
		\mathcal{E}(f,g)=\frac{1}{2}\sum_{\substack{x,y\in V \\ x\sim y}} (f(x)-f(y))(g(x)-x(y)).
	\]
	If we consider $G$ as an electrical network by regarding each edge of $G$ to be a unit resistance, 
		then the \textbf{effective resistance} between disjoint subsets $A$ and $B$ of $V$ is defined by 
	\begin{equation}\label{defofreff}
		\Reff{A}{B}^{-1}=\inf\{\mathcal{E}(f,f)\mathrel{:}\mathcal{E}(f,f)<\infty, f|_A=1,f|_B=0\}.
	\end{equation}
	\end{defi}
If we let $\Reff{x}{y}=\Reff{\{x\}}{\{y\}}$, then $\Reff{\cdot}{\cdot}$ is a metric on $G$, see \cite{Wei19}. 

\begin{defi}\label{def-G}
	Let $B$ be a connected subgraph of $G$. 
	For the simple random walk $X^B$ on $G$ killed when it exits $B$, the \textbf{Green's function} 
		is defined by 
	\begin{equation}\label{defGB}
		G_B(x,y)=\sum_{n=0}^\infty P^x(X_n^B=y).
	\end{equation}
\end{defi}

Let $\mu_G(x)$ be the number of neighboring vertices of $x$ in $G$. 
Effective resistance and Green's function are related by the following equality: 
\begin{equation}\label{Reff-G}
	\mu_G(x)\Reff{x}{B^c}=G_B(x,x),
\end{equation}
see \cite{LP16} Section 2.2, for example. 

Now let us derive some estimates for $Z_B$ (see (\ref{defZB}) for definition) from 
	effective resistance and Green's function. 
For the first moment of $Z_B$, we have 
\begin{align}
	E_{0,0}(Z_B)&=\sum_{n=0}^\infty \sum_{x\in B}P_{0,0}(X_n^B=Y_n^B=x)	\notag \\
		&=\sum_{n=0}^\infty \sum_{x\in B}P^0(X_n^B=x)^2=\sum_{n=0}^\infty P^0(X_{2n}^B=0). \label{reff-g-1}
\end{align}
Since $P^x(X_{2n+1}^B=x)\le P^x(X_{2n}^B=x)$ for all $n$, we have that 
\[
	\frac{1}{2}G_B(x,x)=\frac{1}{2}\sum_{n=0}^\infty \left(P^x(X_{2n}^B=x)+P^x(X_{2n+1}^B=x)\right)\le \sum_{n=0}^\infty P^x(X_{2n}^B=x)\le G_B(x,x).
\]
Thus, it follows from (\ref{reff-g-1}) that 
\begin{equation}\label{reff-g-2}
	\frac{1}{2}G_B(0,0)\le E_{0,0}(Z_B)\le G_B(0,0).
\end{equation}

An upper bound of the second moment is obtained by 
\begin{align}
	E_{0,0}(Z_B^2)&=\sum_{n=0}^\infty \sum_{x\in B}P_{0,0}(X_n^B=Y_n^B=x)	\notag \\
	&+2\sum_{n=0}^\infty \sum_{m=n+1}^\infty \sum_{x\in B}\sum_{y\in B}P_{0,0}(X_n^B=Y_n^B=x,X_m^B=Y_m^B=y)	\notag \\
	&=E_{0,0}(Z_B)+2\sum_{n=0}^\infty P_{0,0}(X_n^B=Y_n^B=x)\sum_{m=0}^\infty P_{x,x}(X_n^B=Y_n^B=y)	\notag \\
	&\le G_B(0,0)+2G_B(0,0)\max_{x\in B}G_B(x,x), \notag
\end{align}
where we applied the Markov property for the second equality and (\ref{reff-g-2}) for the last inequality. 
By plugging (\ref{Reff-G}) into the above inequality, we obtain 
\begin{equation}\label{reff-g-3}
	E_{0,0}(Z_B^2)\le \mu_G(0)\Reff{0}{B^c}+2\mu_G(0)\Reff{0}{B^c}\max_{x\in B}\mu_G(x)\Reff{x}{B^c}.
\end{equation}

%Let $\{\mathcal{G}(\omega)\}$ be a family of random graphs defined on a space $(\Omega,\mathbb{P})$ with a distinguished vertex $o$. 
%We denote by $B_\mathcal{G}(o,r)$ the intrinsic ball of $\mathcal{G}$ centered at $o$ of radius $r\ge 0$. 
%For each $\lambda\ge 1$, let $J(\lambda)$ be the random subset of $\mathbb{Z}_{\ge 0}$ defined by 
%\[J(\lambda)=\{r\in\mathbb{Z}_{\ge 0}:\Reff{o}{B_\mathcal{G}(o,r)^c}\ge r/\lambda \}\]
%Note that for fixed $\lambda$, $\{r\in J(\lambda)\}$ is an event with respect to $\mathcal{G}$. 

%The following theorem gives a criterion for the infinite collision property of random graphs. 
%\begin{theo}\label{crithm}(\cite[Corollary 3.3]{collision})
	%If there exist some function $\psi$ with $\lim_{\lambda \to \infty} \psi(\lambda)=0$ and some $r_0\ge 1$ such that 
	%\begin{equation}\label{criterion}
		%\mathbb{P}(r\in J(\lambda)) \ge 1-\psi(\lambda)\ \ \ \mbox{for all}\ r\ge r_0,
	%\end{equation} 
	%then $\mathcal{G}$ has the infinite collision property $\mathbb{P}$-a.s.
%\end{theo}
%\end{document}

\section{Proof of the main theorem}\label{sec-thm}
	%\documentclass[uplatex,dvipdfmx,11pt]{article}%a4paper
%\usepackage{docmute}
%\usepackage[margin=25mm]{geometry}

%\input{preemble}%dviと数学記号パッケージ(\qed)と文字色(red/memo)とenumerateと定理環境と式番号の設定
%\input{commands}%省エネコマンド

%\begin{document}
In this section, we will prove Theorem \ref{thm-first}. 
In order to do so, we will first estimate the effective resistance of $\ust$ between the origin and $\partial B(0,r)$ 
	in the following theorem. 

Let $U_r$ be the connected component of $\ust\cap B(0,r)$ which contains the origin. 
Recall that $\beta$ is the growth exponent of the three-dimensional LERW defined in (\ref{growthbeta}). 

\begin{theo}\label{effEball}
	There exists some universal constant $C>0$ such that for all $r\ge 1$ and $\lambda>0$, 
	\begin{equation}\label{Ebound}
		\prob(\Reff{0}{\mathcal{U}\setminus U_r}\ge r^\beta/\lambda^{1+4\beta})\ge 1-C\lambda^{-1}.
	\end{equation}
\end{theo}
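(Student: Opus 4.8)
The goal is to lower-bound the effective resistance $\Reff{0}{\ust \setminus U_r}$ with high probability. Since $\ust$ is a tree, the key observation is that effective resistance in a tree is simply graph distance: along any path in a tree, resistances add in series (there are no parallel paths). So $\Reff{0}{\ust \setminus U_r}$ equals the minimum, over vertices $x \in \partial_i U_r$ (inner boundary — those connected to $\ust \setminus U_r$), of $d_\ust(0, x)$... wait, more carefully, it equals the minimum intrinsic distance from $0$ to the "exit set" $\ust \setminus U_r$. Let me reconsider what $\ust \setminus U_r$ means and how resistance to it behaves.

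Let me think about the structure more carefully.
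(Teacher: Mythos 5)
There is a genuine gap here, in two respects. First, your proposal never gets past the setup: it explicitly trails off before any probabilistic argument, and the entire content of the theorem is probabilistic. Second, the one substantive claim you do make is false: in a tree, $\Reff{0}{\ust\setminus U_r}$ is \emph{not} the minimum intrinsic distance from $0$ to the exit set. Resistance to a \emph{set} is reduced by parallel branches: if $k$ disjoint branches of length $L$ lead from $0$ to $\ust\setminus U_r$, the effective resistance is $L/k$, not $L$. Since the three-dimensional UST ball can a priori have many disjoint branches reaching $\partial B(0,r)$, knowing that every boundary point is intrinsically far from $0$ does not by itself give the lower bound \eqref{Ebound}. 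What \emph{is} true, and what the paper exploits via the series law, is that the resistance is bounded below by the length of a common initial segment shared by \emph{all} paths from $0$ to $\ust\setminus U_r$ — a bottleneck. So the correct deterministic reduction is: find a vertex $w$ such that every path from $0$ out of $B(0,r)$ passes through $\gamma(0,w)$, whence $\Reff{0}{\ust\setminus U_r}\ge d_\ust(0,w)$.

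Establishing that such a long bottleneck exists with probability $\ge 1-C\lambda^{-1}$ is the real work, and none of it appears in your proposal. The paper does this by running Wilson's algorithm rooted at the infinite LERW $\gamfty$ from the origin, then adding branches from finite nets $D_k$ covering the annuli $B(0,r)\setminus B(0,(1-\eta_k)r)$ at geometrically shrinking scales $\delta_k=\lambda^{-1}2^{-k}$. Three probabilistic inputs are combined: (i) a hitting estimate (\cite[Theorem 1.5.10]{L99}) showing the first-generation branches from $D_1$ avoid $B(0,\lambda^{-4}r)$, so they attach to $\gamfty$ far from the origin; (ii) the hittability of LERW paths (\cite[Theorem 3.1]{SaSh18}), which forces each later branch from $D_k$ ($k\ge 2$) to coalesce with the already-built subtree before approaching $B(0,r/2)$, so no new path sneaks past the bottleneck; and (iii) lower tail bounds on the LERW length (\cite[Theorem 1.1.4]{S18} together with \cite[Corollary 1.3]{LS19}) giving $\mathrm{len}\bigl(\gamfty[0,\tau(B(0,\lambda^{-4}r)^c)]\bigr)\ge r^\beta/\lambda^{1+4\beta}$ with failure probability $C\exp\{-c\lambda^{1/2}\}$. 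Union-bounding over the nets gives total failure probability $C\lambda^{-1}$. Without some version of steps (i)--(iii) — or an alternative mechanism forcing all boundary branches through one long segment — the statement cannot be reached from the tree-structure observation alone.
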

\begin{proof}
	Note that it sufficies to prove the inequality (\ref{Ebound}) for $\lambda \ge \lambda_0$ where $\lambda_0$ 
		is a sufficiently large universal constant which does not depend on $r$. 

	We first fix $r>0$ and consider a sequence of subsets of $\zzz$ including $\partial_i B(0,r)$. 
	For $k=1,2,\cdots$, let $\delta_k=\lambda^{-1}2^{-k}$ and $\eta_k=(2k)^{-1}$. 
	We define $k_0$ to be the smallest positive integer such that $r\delta_{k_0}<1$. 
	Let 
	\[
		A_k=B(0,r)\setminus B(0,(1-\eta_k)r),
	\]
	and let $D_k$ be a finite subset of lattice points of $A_k$ with $|D_k|\le C\delta_k^{-3}$ such that 
	\[
		A_k\subset \bigcup_{z\in D_k} B(z,\delta_k r).
	\]
	
	Next we perform Wilson's algorithm rooted at infinity (see Section \ref{subsec-WA}) 
		to obtain the desired event of the three-dimensional UST. 
	%\red{ページが余ったら優先的に補充。}
	Let $\ust_0=\gamfty$ \ie the infinite LERW started at the origin. 
	Given $\ust_k~(k\ge 0)$, we regard $\ust_k$ as the root of Wilson's algorithm and add branches started at 
		vertices in $D_{k+1}\setminus \ust_k$ and denote by $\ust_{k+1}$ the resulting random subtree at this step. 
	Once we obtain $\ust_{k_0}$, we add branches started at vertices in $\zzz\setminus \ust_{k_0}$ to 
		complete Wilson's algorithm. 
	Note that $\ust_k~(k=0,1,2,\cdots,k_0)$ is a subtree of $\ust$ containig all vertices in 
		$\bigcup_{i=1}^k D_i\cup\{0\}$ and the sequence $\{\ust_k\}_{k=0}^{k_0}$ is increasing. 
	Since $r\delta_{k_0}<1$, it holds that $\partial_i B(0,r)\subset D_{k_0}\subset \ust_{k_0}$. 
	
	Now we are ready to define the events where the effective resistance in (\ref{Ebound}) is bounded below. 
	First, we examine the behavior of the branches started at vertices contained in $D_1$. 
	For $z\in D_k~(k\ge 1)$, we denote by $y_z$ be the first point of $\ust_{k-1}$ visited by $\gamma(z,0)$ \ie 
		$d(z,y_z)=\min_{y\in \ust_{k-1}} d(z,y)$. 
	We define the event $F_z$ by 
	\begin{equation}\label{proof-0}
		F_z=\{\gamma(z,y_z)\cap B(0,\lambda^{-4}r)=\emptyset \},
	\end{equation}
	for $z\in D_1$. 
	Since $d_E(0,z)\ge r/2$, by \cite[Theorem 1.5.10]{L99}, 
		there exists some constant $C>0$ such that for all $\lambda\ge 2$, 
	\[%begin{equation}
		\label{proof-1}
		\prob(F_z^c)\le \prob(S^z[0,\infty)\cap B(0,\lambda^{-4}r)\neq\emptyset)\le C\lambda^{-4},
	\]%end{equation}
	holds. 
	By taking the union bound, we obtain that 
	\begin{equation}\label{proof-2}
		\prob \left( \bigcup_{z\in D_1} F_z^c \right)\le |D_1|C\lambda^{-4}\le C\lambda^{-1},
	\end{equation}
	where the last inequality follows from the fact that $|D_1|\le C\lambda^3$. 
	
	Second, we bound from below the first time when $\gamfty$ exits $B(0,\lambda^{-4}r)$, 
		which is denoted by $\tau(B(0,\lambda^{-4}r)^c)$. 
	We define the event $\widetilde{F}$ by 
	\begin{equation}\label{proof-2.01}
		\widetilde{F}=\left\{\mathrm{len}\bigl(\gamfty[0,\tau(B(0,\lambda^{-4}r)^c)]\bigr)\ge r^\beta/\lambda^{1+4\beta}\right\}.
	\end{equation}
	By \cite[Theorem 1.1.4]{S18} and \cite[Corollary 1.3]{LS19}, there exist some constants $C>0$ and $c>0$ such that 
	\begin{equation}\label{proof-2.1}
		\prob(\widetilde{F}^c)\le C\exp\{-c\lambda^{1/2}\},
	\end{equation}
	for all $r\ge 1$ and $\lambda>0$. 
	
	Third, we consider the branches started at vertices in $D_k~(k\ge 2)$ step by step. 
	Let us begin by defining an event which guarantees ``hittability'' of $\gamma(x,\infty)$ for $x\in D_k$. 
	To be precise, for $x\in D_k~(k\ge 1)$ and $\xi>0$, we define the event $H_x(\xi)$ by 
	\begin{align*}
		H_x(\xi)=\Bigl\{ \mbox{There exists some } &z\in B(x,\delta_k r) \mbox{ such that }	\\
			&P^z(S^z[0,\tau_{S^z}(B(z,\delta_k^{1/2}r)^c)]\cap \gamma(x,\infty)=\emptyset)\ge \delta_k^\xi \Bigr\},
	%↓二段組
		%H_x(\xi)&=\Bigl\{ \mbox{There exists some } z\in B(x,\delta_k r)	\\
		%&\mbox{ such that }P^z(S^z[0,\tau_{S^z}(B(z,\delta_k^{1/2}r)^c)]\cap \gamma(x,\infty)=\emptyset)	\\
		%&\ge \delta_k^\xi \Bigr\},
	\end{align*}
	where $S^z$ is an independent simple random walk started at $z\in\zzz$ and $P^z$ denotes its law. 
	Let
	\begin{equation}\label{proof-2.2}
		\widetilde{H}_k\coloneqq \bigcap_{x\in D_k}H_x(\xi_1)^c. 
	\end{equation}
	Note that %for each $z\in D_k$, 
	$P^z(S^z[0,\tau_{S^z}(B(z,\delta_k^{1/2}r)^c)]\cap \gamma(x,\infty)=\emptyset)$ is a function of $\gamma(x,\infty)$ 
		and thus $H_x$ and $\widetilde{H}_k$ are measurable with respect to $\ust_k$. 
	By \cite[Theorem 3.1]{SaSh18}, there exist some $C>0$ and $\xi_1$ such that 
	\begin{equation}\label{proof-3}
		\prob(H_x(\xi_1))\le C\delta_k^4\ \ \ \mbox{for all $r\ge 1$, $k\ge 1$ and $x\in D_k$},
	\end{equation}
	from which it follows that 
	\begin{equation}\label{proof-4}
		\prob(\widetilde{H}_k)\ge 1-|D_k|C\delta_k^4\ge 1-C'\delta_k,
	\end{equation}
	where $C'>0$ is uniform in $r\ge 1$ and $k\ge 1$. 

	Now we will demonstrate that conditioned on the event $\widetilde{H}_k$, 
		branches $\gamma(z,y_z)~(z\in D_{k+1})$ are included in $A_k$ with high conditional probability. 
	Let $M=\lceil 4/\xi_1\rceil$. 
	For $z\in D_{k+1}$, let 
	\[
		I_z=\left\{S^z[0,\tau_{S^z}(B(z,M\delta_k^{1/2}r))]\cap \ust_k=\emptyset \right\}.
	\]
	Since $z\in D_{k+1}\subset A_k$, we can take some $x\in D_k$ with $z\in B(x,\delta_k r)$ and 
		on the event $I_z$, we have that 
	\[
		S^z[0,T^1]\cap \gamma(x,\infty)=\emptyset,
	\]
	holds, where $T^1=\tau_{S^z}(B(z,\delta_k^{1/2}r))$. 
	
	In the rest of this proof, we take $\lambda\ge 6M$ without loss of generality. 
	Since $d_E(z,S^z(T^1-1))\le \delta_k r$, we have that $z_1\coloneqq S^z(T^1-1)\in A_k$ and 
		we can take $x_1\in D_k$ with $z_1\in B(x_1,\delta_k r)$. 
	By the same argument as the above, on the event $I_z$ we have that $S^z[T^1,T^2]\cap\gamma(x,\infty)=\emptyset$, 
		where $T^2=\tau_{S^z}(B(z_1,\delta_k^{1/2}r))$. 
	Iteratively, we obtain the sequences $\{T^i\}$, $\{z_i\}\subset A_k$ and $\{x_i\}\subset D_k~(i=1,2,\cdots, M)$ 
		and we have that 
	\[
		I_z\subset \bigcap_{i=1}^M \{R[T^{i-1},T^i]\cap \gamma(x_{i-1},\infty)=\emptyset \},
	\]
	where we set $T^0=0$ and $x_0=x$. 
	By the strong Markov property, it holds that 
	\begin{align*}
		P^z(I_z)&\le P^z\left(\bigcap_{i=1}^M \{R[T^{i-1},T^i]\cap \gamma(x_{i-1},\infty)=\emptyset \}\right)	\\
		&=\prod_{i=1}^M P^{z_{i-1}}(S^{z_{i-1}}[0,\tau_{S^{z_{i-1}}}(B(z_{i-1},\delta_k^{1/2}r))]\cap \gamma(x_{i-1},\infty)=\emptyset),
		%↓二段組
		%P^z&(I_z)	\\
		%&\le P^z\left(\bigcap_{i=1}^M \{R[T^{i-1},T^i]\cap \gamma(x_{i-1},\infty)=\emptyset \}\right)	\\
		%&=\prod_{i=1}^M P^{z_{i-1}}(S^{z_{i-1}}[0,\tau_{S^{z_{i-1}}}(B(z_{i-1},\delta_k^{1/2}r))]\cap	\\
		%&\gamma(x_{i-1},\infty)=\emptyset),		
	\end{align*}
	from which it follows that 
	\[
		\widetilde{H}_k\subset \{P^z(I_z)\le \delta_k^4\}.
	\]
	Thus, by Wilson's algorithm, we have that for all $z\in D_{k+1}$, 
	\begin{equation}\label{proof-5}
		\prob\left(\gamma(z,y_z)\not\subset B(z,M\delta_k^{1/2}r)\mid \widetilde{H}_k\right)\le \delta_k^4.
	\end{equation}
	We define the event $\widetilde{I}_{k+1}$, which is measurable with respect to $\ust_{k+1}$, by 
	\begin{equation}\label{proof-5.1}
		\widetilde{I}_{k+1}=\bigcap_{z\in D_k+1} \left\{\gamma(z,y_z)\subset B(z,M\delta_k^{1/2}r)\right\}.
	\end{equation}
	Then by $(\ref{proof-5})$ and that $|D_{k+1}|\le C\delta_k^{-3}$, it holds that 
	\[
		\prob(\widetilde{I}_{k+1}\mid \widetilde{H}_k)\ge 1-|D_{k+1}|\delta_k^4\ge 1-C\delta_k.
	\]
	Combining this with (\ref{proof-4}), we obtain that 
	\begin{equation}\label{proof-6}
		\prob(\widetilde{H}_k\cap \widetilde{I}_{k+1})\ge 1-C\delta_k,
	\end{equation}
	for some universal constant $C>0$. 
	
	Finally we construct an event where the desired effective resistance bound holds. 
	%figure
	\begin{figure}[tb]
		\centering
		%\centerline{
		\includegraphics[width=0.8\linewidth]{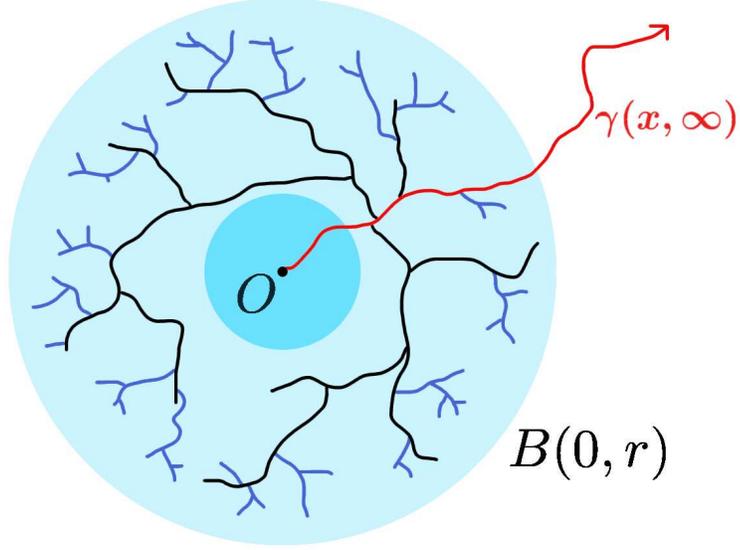}%}
		\label{eventK}
		\caption{In this figure, two circles represent Euclidean balls centered at the origin: the larger one is of radius 
		$r$ and the small one is of radius $\lambda^{-4}r$. 
		On the event $K$, the branches from $D_1$ do not enter the smaller ball of radius $\lambda^{-4}r$ and 
		branches from $D_k~(k\ge 2)$ hits the already constructed subtree $\ust_{k-1}$ before entering $B(0,r/2)$. 
		Moreover, the length of $\gamfty$ up to the exiting time $\tau(B(0,\lambda^{-4}r))$ is bounded below by 
		$r^\beta/\lambda^{1+4\beta}$.}
	\end{figure}
	Let 
	\begin{equation*}\label{proof-6.1}
		K=\left(\bigcap_{z\in D_1} F_z\right) \cap \widetilde{F} \cap \left(\bigcap_{k=1}^{k_0} (\widetilde{H}_k \cap \widetilde{I}_{k+1})\right).
	\end{equation*}
	Recall that $F_z$, $\widetilde{F}$, $\widetilde{H}_k$ and $\widetilde{I}_{k+1}$ are defined by 
		(\ref{proof-0}), (\ref{proof-2.01}), (\ref{proof-2.2}) and (\ref{proof-5.1}), respectively. 

	Then combining (\ref{proof-2}), (\ref{proof-2.1}) and (\ref{proof-6}), we obtain that 
	\begin{equation}\label{proof-7}
		\prob(K^c)\le C\lambda^{-1}+C\exp\{-c\lambda^{1/2}\}+\sum_{k=1}^\infty C\delta_k\le C\lambda^{-1}.
	\end{equation}
	We claim that on the event $K$, the following two statements hold: 
	\begin{enumerate}[(1)]\setlength{\leftskip}{2em}
		\item $d(0,y_z)\ge r^\beta/\lambda^{1+4\beta}$ for all $z\in D_1$.
		\item For $k\ge 2$, $\gamma(z,0)$ hits $\ust_1$ before entering $B(0,r/2)$ for all $z\in D_k$.
	\end{enumerate}
	Note that (1) is immideate from $K\subset (\bigcup_{z\in D_1} F_z)^c\cap \widetilde{F}^c$ and 
		(2) follows from $K\subset (\bigcap_{k=1}^{k_0} (\widetilde{I}_{k+1}\cap\widetilde{H}_k))$. 
	
	Suppose that $K$ occurs. 
	Let $w$ be an element of $\{y_z\mathrel{:} z\in D_1\}$ which satisfies $d(0,w)=\min_{z\in D_1}d(0,y_z)$. 
	It follows from the above statements (1) and (2) that every path of $\ust$ connecting the origin and $B(0,r)^c$
		includes $\gamma(0,w)$ (recall that $\partial_i B(0,r)\subset D_{k_0}$). 
	Thus, by the series law of effective resistance (see \cite{LP16} Section 2.3, for example), we have that 
	\begin{align*}
		\Reff{0}{\ust\setminus U_r}&=\Reff{0}{w}+\Reff{w}{\ust\setminus U_r}	\\
				&\ge d(0,w) \ge r^\beta/\lambda^{1+4\beta}.
	\end{align*}
	Combining this with (\ref{proof-7}) yields the desired result (\ref{Ebound}). 
\end{proof}
	
Now we are ready to prove Theorem \ref{thm-first}. 
Recall that $Z_B$ is defined in (\ref{defZB}). 
In the rest of the article, we set $B_r=B_\ust(0,r)$. 
%\begin{theo}\label{thm-epsilon}
	%For any $r\ge 1$ and $\vep>0$, there exist some universal constant $C>0$, $c>0$ and 
		%some event $K(r,\vep)$ with $\prob(K(r,\vep))\ge 1-C\vep^c$ such that on $K(r,\vep)$, 
	%\begin{equation}\label{1st-moment}
		%\vep r\le E_{0,0}(Z_{B_r}) \le 6r,
	%\end{equation}
	%holds. Moreover, for any realization $\ust$, 
	%\begin{equation}\label{2nd-moment}
		%E_{0,0}(Z_{B_r}^2)\le 144r^2+6r,
	%\end{equation}
	%holds. In particular, on $K(r,\vep)$ we have 
	%\begin{equation}\label{thm-e-1}
		%P_{0,0}(\vep r \le Z_{B_r}\le 72\vep^{-2} r)\ge \vep^2/12.
	%\end{equation}
%\end{theo}
\begin{proof}[Proof of Theorem \ref{thm-first}]
	Let us define the event $\widetilde{K}(r,\lambda)$ by
	\begin{equation}\label{proof2-1}
		\widetilde{K}(r,\lambda)=\left\{\Reff{0}{B_\ust(0,r)^c}\ge r/\lambda\right\}.
	\end{equation}
	By \cite[Proposition 4.1]{ACHS20}, there exist some $C'>0$ and $c'\in (0,1)$ such that 
	\begin{equation*}
		\prob\left(U_r\not\subset B_\ust(0,\lambda r^\beta)\right)\le C'\lambda^{-c'},
	\end{equation*}
	for all $r>1$ and $\lambda\ge 1$. 
	On the event $\{U_r\subset B_\ust(0,\lambda r^\beta)\}$, by monotonicity 
	\[
		\Reff{0}{\ust\setminus U_r}\le \Reff{0}{B_\ust(0,\lambda r^\beta)^c},
	\]
	holds (see \cite{LP16} Section 2.2, for example). 
	Thus, we have 
	\begin{align*}
		\prob\Bigl(&\Reff{0}{B_\ust(0,\lambda r^\beta)^c}<r^\beta/\lambda^{1+4\beta}\Bigr)	\notag	\\
		\le &\prob\Bigl(\Reff{0}{B_\ust(0,\lambda r^\beta)^c}<r^\beta/\lambda^{1+4\beta},\ 	\notag U_r\subset B_\ust(0,\lambda r^\beta)\Bigr)+\prob\left(U_r\not\subset B_\ust(0,\lambda r^\beta)\right)	\notag	\\
		\le &\prob\left(\Reff{0}{\ust\setminus U_r}< r^\beta/\lambda^{1+4\beta}\right)+C'\lambda^{-c'}.
	\end{align*}
	By Thorem \ref{effEball}, we obtain that 
	\begin{align*}
		\prob\left(\Reff{0}{B_\ust(0,\lambda r^\beta)^c}\ge r^\beta/\lambda^{1+4\beta}\right)
		&\ge \prob\left(\Reff{0}{\ust\setminus U_r}\ge r^\beta/\lambda^{1+4\beta}\right)-C'\lambda^{-c'}	\\
		&\ge 1-C\lambda^{-1}-C'\lambda^{-c'}.
	\end{align*}
	By reparameterizing $R=\lambda r^\beta$, and taking $C'>0$ properly, we have that 
	\begin{equation}\label{proof2-1.1}
		\prob\left(\widetilde{K}(R,\lambda)\right)\ge 1-C'\lambda^{-\frac{c'}{2+4\beta}}.
	\end{equation}
	%Note that we can take $C'>0$ uniformly in $R$ and $\lambda$. 
	%Thus, $\psi(\lambda)=\lambda^{-\frac{c'}{2+4\beta}}$ satisfies the criterion (\ref{criterion}). 
	%By Theorem \ref{crithm}, we obtain the conclusion that the three-dimensional UST has the infinite collision property a.s.
	
	Next we make use of the estimates of $E(Z_B)$ and $E(Z_B^2)$ in Section \ref{sec-defi}. 
	Since $1\le \mu_\ust(x)\le 6$ for all $x\in\zzz$, it follows from (\ref{Reff-G}) and (\ref{reff-g-2}) that 
		on the event $\widetilde{K}(r,\lambda)$, 
	\begin{equation}\label{proof2-2}
		\frac{r}{2\lambda}\le E_{0,0}(Z_{B_r}) \le 6r,
	\end{equation}
	where we plugged $\Reff{0}{B_\ust(0,r)^c}\le r$ to obtain the second inequality. 
	By reparameterization, (\ref{1st-moment}) follows. 
	
	On the other hand, since 
	\[
		\Reff{x}{B_\ust(0,r)^c}\le \Reff{x}{0}+\Reff{0}{B_\ust(0,r)^c}\le 2r,
	\]
	for $x\in B_\ust(0,r)$, plugging this into (\ref{reff-g-3}) yields that 
	\begin{equation}\label{proof2-3}
		E_{0,0}(Z_{B_r}^2)\le 144r^2+6r,
	\end{equation}
	for any realization $\ust$, which gives (\ref{2nd-moment}). 
	
	Now we will apply the second moment method to $Z_{B_r}$ on the event 
		$\widetilde{K}(r,\lambda)$. 
	By (\ref{proof2-2}) and (\ref{proof2-3}), on the event $\widetilde{K}(r,\lambda)$ we have 
	\begin{align*}
		P_{0,0}\left(Z_{B_r}\ge \frac{r}{12\lambda}\right) &\ge P_{0,0}\left(Z_{B_r}\ge \frac{1}{6}E_{0,0}(Z_{B_r})\right)	\notag	\\
			&\ge \frac{25E_{0,0}(Z_{B_r})^2}{36E_{0,0}(Z_{B_r}^2)}\ge \frac{1}{6\cdot(12\lambda)^2}. 
	\end{align*}
	By reparameterizing $\vep^{-1}=12\lambda$, we have that on $\widetilde{K}(r,\vep^{-1}/12)$,
	\begin{equation}\label{proof2-4}
		P_{0,0}(Z_{B_r}\ge \vep r)\ge \vep^2/6.
	\end{equation}
	Finally, by Markov's inequality, 
	\begin{equation*}
		P_{0,0}(Z_{B_r}\ge 72\vep^{-2}r)\ge P_{0,0}(Z_{B_r}\ge 12\vep^{-2}E_{0,0}(Z_{B_r}))\ge \vep^2/12,
	\end{equation*}
	holds on the event $\widetilde{K}(r,\vep^{-1}/12)$. 
	Combining this with (\ref{proof2-4}) gives (\ref{thm-e-1}). 
\end{proof}

We obtain the infinite collision property of the three-dimensional UST as a corollary. 
%\begin{cor}
	%The three-dimensional UST has the infinite collision property $\prob$-almost surely. 
%\end{cor}
\begin{proof}[Proof of Corollary \ref{mainthm}]
	Suppose $\omega\in K(r,\vep)$ and let $\ust(\omega)$ be the corresponding realization of UST. 
	We take two simple random walks $X$ and $Y$ on $\ust(\omega)$. 
	By Theorem \ref{thm-first}, for any $N\ge 1$ and any fixed $\vep>0$, 
	\[
		P_{0,0}(Z\ge N)\ge P_{0,0}\left(Z_{B_{\vep^{-1}N}}\ge N\right)\ge \vep^2/12,
	\]
	holds. 
	By taking the limit $N\to\infty$ we obtain that $P_{0,0}(Z=\infty)\ge \vep^2/12$, 
		from which the infinite collision property of $\ust(\omega)$ follows. 
	Thus, 
	\[
		\prob(\{\ust(\omega) \mbox{ has the infinite collision property}\})\ge 1-C\vep^c.
	\]
	Since $\vep$ is arbitrary, we have that 
		$\prob(\{\ust(\omega) \mbox{ has the infinite collision property}\})=1$, 
		which completes the proof. 
\end{proof}

\begin{remark}
	We can also derive the infinite collision property of the three-dimensional UST from (\ref{proof2-1.1}) by 
		applying Corollary 3.3 of \cite{collision}. 
	In this article, we gave another proof by using quantitative estimates of the number of collisions in the 
		intrinsic ball $Z_{B_r}$.  
\end{remark}

%\end{document}

%\bibliographystyle{amsplain}
%\bibliography{infty_colli}
% \bib, bibdiv, biblist are defined by the amsrefs package.
\begin{bibdiv}
\begin{biblist}

\bib{ACHS20}{article}{
      author={Angel, O.},
      author={Croydon, D.~A.},
      author={Hernandez-Torres, S.},
      author={Shiraishi, D.},
       title={Scaling limits of the three-dimensional uniform spanning tree and
  associated random walk},
        date={2021},
        ISSN={0091-1798},
     journal={Ann. Probab.},
      volume={49},
      number={6},
       pages={3032\ndash 3105},
         url={https://doi.org/10.1214/21-aop1523},
      review={\MR{4348685}},
}

\bib{BCK21}{article}{
      author={Barlow, M.~T.},
      author={Croydon, D.~A.},
      author={Kumagai, T.},
       title={Quenched and averaged tails of the heat kernel of the
  two-dimensional uniform spanning tree},
        date={2021},
        ISSN={0178-8051},
     journal={Probab. Theory Related Fields},
      volume={181},
      number={1-3},
       pages={57\ndash 111},
         url={https://doi.org/10.1007/s00440-021-01078-w},
      review={\MR{4341070}},
}

\bib{BM}{article}{
      author={Barlow, M.~T.},
      author={Masson, R.},
       title={Spectral dimension and random walks on the two dimensional
  uniform spanning tree},
        date={2011},
        ISSN={0010-3616},
     journal={Comm. Math. Phys.},
      volume={305},
      number={1},
       pages={23\ndash 57},
         url={https://doi.org/10.1007/s00220-011-1251-8},
      review={\MR{2802298}},
}

\bib{collision}{article}{
      author={Barlow, M.~T.},
      author={Peres, Y.},
      author={Sousi, P.},
       title={Collisions of random walks},
        date={2012},
        ISSN={0246-0203},
     journal={Ann. Inst. Henri Poincar\'{e} Probab. Stat.},
      volume={48},
      number={4},
       pages={922\ndash 946},
         url={https://doi-org.kyoto-u.idm.oclc.org/10.1214/12-AIHP481},
      review={\MR{3052399}},
}

\bib{BLPS01}{article}{
      author={Benjamini, I.},
      author={Lyons, R.},
      author={Peres, Y.},
      author={Schramm, O.},
       title={Uniform spanning forests},
        date={2001},
        ISSN={0091-1798},
     journal={Ann. Probab.},
      volume={29},
      number={1},
       pages={1\ndash 65},
         url={https://doi.org/10.1214/aop/1008956321},
      review={\MR{1825141}},
}

\bib{Hal-Hut}{arXiv}{
      author={Halberstam, N.},
      author={Hutchcroft, T.},
       title={Logarithmic corrections to the alexander-orbach conjecture for
  the four-dimensional uniform spanning tree},
        date={2022},
        eprint={2211.01307},
}

\bib{Hut-2}{article}{
      author={Hutchcroft, T.},
       title={Universality of high-dimensional spanning forests and sandpiles},
        date={2020},
        ISSN={0178-8051},
     journal={Probab. Theory Related Fields},
      volume={176},
      number={1-2},
       pages={533\ndash 597},
         url={https://doi.org/10.1007/s00440-019-00923-3},
      review={\MR{4055195}},
}

\bib{Hut-Peres}{article}{
      author={Hutchcroft, T.},
      author={Peres, Y.},
       title={Collisions of random walks in reversible random graphs},
        date={2015},
     journal={Electron. Commun. Probab.},
      volume={20},
       pages={no. 63, 6},
         url={https://doi-org.kyoto-u.idm.oclc.org/10.1214/ECP.v20-4330},
      review={\MR{3399814}},
}

\bib{Kris-Peres}{article}{
      author={Krishnapur, M.},
      author={Peres, Y.},
       title={Recurrent graphs where two independent random walks collide
  finitely often},
        date={2004},
        ISSN={1083-589X},
     journal={Electron. Comm. Probab.},
      volume={9},
       pages={72\ndash 81},
         url={https://doi-org.kyoto-u.idm.oclc.org/10.1214/ECP.v9-1111},
      review={\MR{2081461}},
}

\bib{L99}{incollection}{
      author={Lawler, G.~F.},
       title={Loop-erased random walk},
        date={1999},
   booktitle={Perplexing problems in probability},
      series={Progr. Probab.},
      volume={44},
   publisher={Birkh\"{a}user Boston, Boston, MA},
       pages={197\ndash 217},
      review={\MR{1703133}},
}

\bib{LS19}{article}{
      author={Li, X.},
      author={Shiraishi, D.},
       title={One-point function estimates for loop-erased random walk in three
  dimensions},
        date={2019},
     journal={Electron. J. Probab.},
      volume={24},
       pages={Paper No. 111, 46},
         url={https://doi.org/10.1214/19-ejp361},
      review={\MR{4017129}},
}

\bib{LP16}{book}{
      author={Lyons, R.},
      author={Peres, Y.},
       title={Probability on trees and networks},
      series={Cambridge Series in Statistical and Probabilistic Mathematics},
   publisher={Cambridge University Press, New York},
        date={2016},
      volume={42},
        ISBN={978-1-107-16015-6},
         url={https://doi.org/10.1017/9781316672815},
      review={\MR{3616205}},
}

\bib{P91}{article}{
      author={Pemantle, R.},
       title={Choosing a spanning tree for the integer lattice uniformly},
        date={1991},
        ISSN={0091-1798},
     journal={Ann. Probab.},
      volume={19},
      number={4},
       pages={1559\ndash 1574},
  url={http://links.jstor.org/sici?sici=0091-1798(199110)19:4<1559:CASTFT>2.0.CO;2-E&origin=MSN},
      review={\MR{1127715}},
}

\bib{SaSh18}{article}{
      author={Sapozhnikov, A.},
      author={Shiraishi, D.},
       title={On {B}rownian motion, simple paths, and loops},
        date={2018},
        ISSN={0178-8051},
     journal={Probab. Theory Related Fields},
      volume={172},
      number={3-4},
       pages={615\ndash 662},
         url={https://doi.org/10.1007/s00440-017-0817-6},
      review={\MR{3877544}},
}

\bib{S18}{article}{
      author={Shiraishi, D.},
       title={Growth exponent for loop-erased random walk in three dimensions},
        date={2018},
        ISSN={0091-1798},
     journal={Ann. Probab.},
      volume={46},
      number={2},
       pages={687\ndash 774},
         url={https://doi.org/10.1214/16-AOP1165},
      review={\MR{3773373}},
}

\bib{SWarx}{arXiv}{
      author={Shiraishi, D.},
      author={Watanabe, S.},
       title={Volume and heat kernel fluctuations for the three-dimensional
  uniform spanning tree},
        date={2022},
        eprint={2211.15031},
}

\bib{Wei19}{article}{
      author={Weihrauch, T.},
       title={A characterization of effective resistance metrics},
        date={2019},
        ISSN={0926-2601},
     journal={Potential Anal.},
      volume={51},
      number={3},
       pages={437\ndash 467},
         url={https://doi.org/10.1007/s11118-018-9718-2},
      review={\MR{4023471}},
}

\bib{W96}{inproceedings}{
      author={Wilson, D.~B.},
       title={Generating random spanning trees more quickly than the cover
  time},
        date={1996},
   booktitle={Proceedings of the {T}wenty-eighth {A}nnual {ACM} {S}ymposium on
  the {T}heory of {C}omputing ({P}hiladelphia, {PA}, 1996)},
   publisher={ACM, New York},
       pages={296\ndash 303},
         url={https://doi.org/10.1145/237814.237880},
      review={\MR{1427525}},
}

\bib{W10}{article}{
      author={Wilson, D.~B.},
       title={Dimension of the loop-erased random walk in three dimensions},
        date={2010Dec},
     journal={Phys. Rev. E},
      volume={82},
       pages={062102},
         url={https://link.aps.org/doi/10.1103/PhysRevE.82.062102},
}

\end{biblist}
\end{bibdiv}

%arXivの形式直ってるか注意すること。

\end{document}